 \newtheorem{theorem}{Theorem}[section]
 \newtheorem{remark}[theorem]{Remark}
\begin{document}

\title[Extended Mapping Class Group]{The Torsion Generating Set Of The Extended Mapping Class Groups In Low Genus Cases}

\author{Xiaoming Du}
\address{South China University of Technology,
  Guangzhou 510640, P.R.China}
\email{scxmdu@scut.edu.cn}

\keywords{mapping class group, generator, torsion}

\subjclass[2010]{57N05, 57M20, 20F38}

\thanks{}

\maketitle

\begin{abstract}
We prove that for genus $g=3,4$, the extended mapping class group $\text{Mod}^{\pm}(S_g)$ can be generated by two elements of finite orders.
But for $g=1$, $\text{Mod}^{\pm}(S_1)$ cannot be generated by two elements of finite orders.
\end{abstract}

\section{Introduction}

Korkmaz has proved that the mapping class group $\text{Mod}(S_g)$
can be generated by two elements of finite orders in \cite{Ko1}.
Using the notation that $\langle m, n \rangle$ (m, n are integers) to mean
a group can be generated by two elements whose orders are $m$ and $n$ respectively,
Korkmaz's result says:
\begin{table}[h]
\begin{center}
\begin{tabular}{|c|c|}
  \hline
  \multirow{2}{*}{$\text{Mod}(S_g)$} & torsion generating set \\
  & consisting of two elements\\
  \hline
  $g = 1$ & $\langle 4, 6 \rangle$ \\
  \hline
  $g = 2$ & $\langle 6, 10 \rangle$ \\
  \hline
  $g \geq 3$ & $\langle 4g+2, 4g+2 \rangle$ \\
  \hline
\end{tabular}
\end{center}
\end{table}

It is an open problem listed in \cite{Ko2} that whether the extended mapping class group $\text{Mod}^{\pm}(S_g)$ can be generated by two torsion elements.
In \cite{Du}, the author partially solved such a problem when the genus $g \geq 5$. In this paper, we deal with $g=1, 3, 4$.

When $g = 3, 4$, the method and idea in the process of calculation in this paper are mostly the same as those in \cite{Du} and \cite{Ko1}.
The reason for $g=3$ and $g=4$ should be treated separately is as the follow. When the genus is high,
there will be plenty of space to find a simple closed curve satisfying two conditions: (1) it lies in the periodic orbit;
(2) it does not intersect with some given curves. When the genus is less than 5, we cannot do this. So we use other treatment carefully.
When $g = 1$, we use the presentation of $GL(2,\mathbb{Z})$ to prove it cannot be generated by two elements of finite orders.
So we can summarize the result as follow:
\begin{table}[h]
\begin{center}
\begin{tabular}{|c|c|}
  \hline
  \multirow{2}{*}{$\text{Mod}{\pm}(S_g)$} & torsion generating set \\
  & consisting of two elements\\
  \hline
  $g = 1$ & impossible \\
  \hline
  $g = 2$ & still unknown \\
  \hline
  $g \geq 3$ & $\langle 2, 4g+2 \rangle$ \\
  \hline
\end{tabular}
\end{center}
\end{table}

\section{Preliminary}

\textbf{Notations.}

(a) We use the convention of functional notation, namely, elements of
the mapping class group are applied right to left, i.e. the composition $FG$ means
that $G$ is applied first.

(b) A Dehn twist means a right-hand Dehn twist.

(c) We denote the curves by lower case letters $a$, $b$, $c$, $d$
(possibly with subscripts) and the Dehn twists about them by the corresponding
capital letters $A$, $B$, $C$, $D$. Notationally we do not distinguish a
diffeomorphism/curve and its isotopy class.

\textbf{Humphries generators and the $(4g+2)$-gon.}

Humphries have proved the following theorem (\cite{Hu}).

\begin{theorem}
Let $a_1, a_2, \dots\, a_{2g}, b_0$ be the curves as on the left-hand side of figure 1.
Then the mapping class group $\text{Mod}(S_g)$ is generated by $A_i$'s and $B_0$.
\end{theorem}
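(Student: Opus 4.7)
My plan is to start from a larger classical generating set and whittle it down. Lickorish's theorem provides that $\text{Mod}(S_g)$ is generated by $A_1, \ldots, A_{2g}$ together with the Dehn twists $B_0, B_1, \ldots, B_{g-2}$ about a family of ``vertical" curves around the successive handles of $S_g$. Since $A_1, \ldots, A_{2g}$ and $B_0$ are already in our list, it suffices to show that every $B_j$ with $j \geq 1$ lies in the subgroup $H := \langle A_1, \ldots, A_{2g}, B_0 \rangle$. I would proceed by induction on $j$, assuming $B_0, B_1, \ldots, B_{j-1} \in H$ and producing $B_j$.

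The workhorse is the lantern relation. For each step of the induction, I would exhibit an embedded four-holed sphere $P \subset S_g$ whose three interior simple closed curves include $b_j$, and whose remaining two interior curves and four boundary curves can all be written as $F(c)$ for some curve $c \in \{a_1,\ldots,a_{2g},b_0,b_1,\ldots,b_{j-1}\}$ and some $F \in H$. Using the conjugation identity $F T_c F^{-1} = T_{F(c)}$, every twist appearing in the lantern relation other than $B_j$ then lies in $H$, so the identity
$$B_j \cdot T_{c_1} \cdot T_{c_2} = T_{d_1} T_{d_2} T_{d_3} T_{d_4}$$
solves for $B_j$ as an element of $H$. To build the conjugators $F$, I would lean on the braid relation $A_i A_{i+1} A_i = A_{i+1} A_i A_{i+1}$ for adjacent chain curves and the commutation of twists about disjoint curves, together with $B_0$ for moves that cross the chosen handle; by chaining these one obtains enough flexibility inside $H$ to realize any prescribed non-separating curve in the required ``lantern position" as an image of $a_i$ or $b_{j-1}$.

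The main obstacle I expect is the combinatorial step of actually laying out the lantern for each $j$ and writing down an explicit element of $H$ that carries the standard Humphries curves onto the required boundary and auxiliary curves. This is essentially a bookkeeping problem, but one that demands careful pictures: one must verify that the target curves sit in a standard four-holed-sphere configuration and that the conjugator can be built from what is already available in $H$. For large $g$ there is ample room to maneuver, and the induction runs smoothly; the low-genus instances and, in particular, the $g = 2$ case where the induction is essentially vacuous would be handled by a direct calculation with a single explicit lantern.
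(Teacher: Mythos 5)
The paper does not prove this statement at all: it is Humphries' theorem, quoted with a citation to \cite{Hu}, so there is no internal argument to compare with, and your proposal is in effect a sketch of the standard modern proof (Lickorish's $3g-1$ twist generators reduced to the $2g+1$ Humphries twists via the lantern relation, as in Farb--Margalit's treatment). The strategy is the right one, but as written there is a genuine gap at exactly the load-bearing step. You justify the inductive step by claiming that braid relations and disjointness commutations give ``enough flexibility inside $H$ to realize any prescribed non-separating curve in the required lantern position as an image of $a_i$ or $b_{j-1}$.'' That claim is essentially circular: if $H$ were known to act that transitively on non-separating curves, then by the change-of-coordinates principle every twist about a non-separating curve would already lie in $H$ and the theorem would follow with no lantern at all; establishing such transitivity for $H$ is tantamount to proving the theorem. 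The actual proof does not realize ``any prescribed'' curve. It chooses, for each extra Lickorish twist, one specific embedded four-holed sphere and exhibits, by explicit pictures, each of its seven curves as the image of an already-available curve under an explicit short word in already-available twists; writing down those configurations and conjugating words is the entire content of the argument, and until that is done your text is a plan rather than a proof.

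Two smaller points. For $g=2$ the Lickorish and Humphries sets coincide ($3g-1=2g+1=5$), so your induction is vacuous and no ``single explicit lantern'' computation is needed there (indeed the present paper remarks that the lantern configuration it uses is unavailable for $g\le 2$); and for $g=1$ the two chain twists already generate $\text{Mod}(S_1)$, so that case is immediate as well. Also be careful with notation: the curves you call $b_1,\dots,b_{g-2}$ (Lickorish's extra ``handle-linking'' curves) are not the curves $b_i=\sigma^i(b_0)$ of this paper, so if your argument were to be spliced into this context the clash would need to be resolved.
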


\begin{figure}[htbp]
\centering
\includegraphics[height=3.5cm]{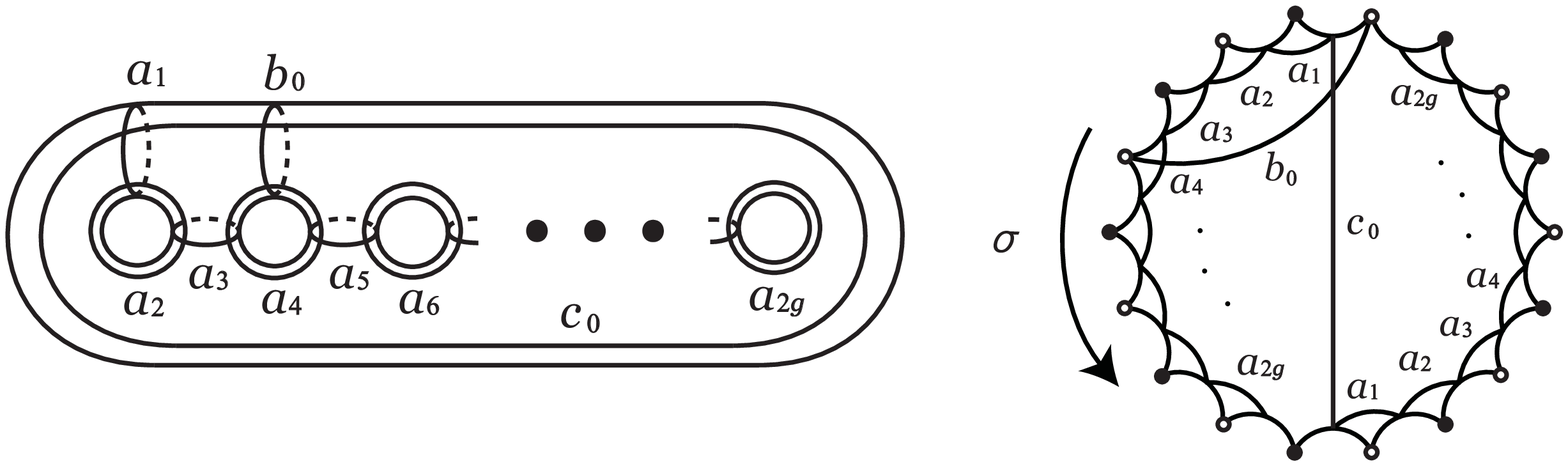} \\
\textbf{Figure 1.}
\end{figure}

The genus $g$ surface can be looked as a $(4g+2)$-gon, whose opposite edges are glued together in pairs.
$(4g+2)$ vertices of the $(4g+2)$-gon are glued to be two vertices.

We can also draw the curves $a_1, a_2, \dots\, a_{2g}, b_0$ on the $(4g+2)$-gon as the right-hand side of figure 1.
There is a natural rotation $\sigma$ of the $(4g+2)$-gon that sends $a_i$ to $a_{i+1}$.
In this paper, we will use the curve $c_0$ as figure 1 shows.
Denote $b_i=\sigma^i(b_0), c_i=\sigma^i(c_0)$. They are also used in this paper.

We need the intersection numbers between the curves $a_j, b_k, c_l$.
Consider the index $i,j,k$ in modulo $4g+2$ classes.
When viewing these curves in the $(4g+2)$-gon, we need to be careful.
Some times though two such curves meet at the vertex of the $(4g+2)$-gon,
They do not really intersect. We can perturb them a little to cancel the intersection point.
The intersection numbers between $a_j, b_k, c_l$ are listed as follow:
\begin{enumerate}
  \item $i(a_j,a_k)=0$ if and only if $|j-k| \neq 1$.
  \item $i(a_j,a_k)=1$ if and only if $|j-k| = 1$.
  \item $i(b_j,b_k)=0$ if and only if $|j-k| \not \in \{1,2,3,2g-2,2g\}$.
  \item $i(b_j,b_k)=1$ if and only if $|j-k| \in \{1,3,2g-2,2g\}$.
  \item $i(b_j,b_k)=2$ if and only if $|j-k| = 2$.
  \item $i(c_j,c_k)=0$ if and only if $j = k$.
  \item $i(c_j,c_k)=1$ if and only if $j \neq k$.
  \item $i(a_j,b_k)=0$ if and only if $j-k \not \in \{0,4\}$.
  \item $i(a_j,b_k)=1$ if and only if $j-k \in \{0,4\}$.
  \item $i(a_j,c_k)=0$ if and only if $k-j \not \in \{-1,0\}$.
  \item $i(a_j,c_k)=1$ if and only if $k-j \in \{-1,0\}$.
  \item $i(b_j,c_k)=0$ if and only if $k-j \not \in \{0,1,2,3\}$.
  \item $i(b_j,c_k)=1$ if and only if $k-j \in \{0,1,2,3\}$.
\end{enumerate}

\textbf{Some torsion elements}

Obviously we have $\sigma^{4g+2}=1$.
Take the reflection $\tau$ of the regular $(4g+2)$-gon satisfying $\tau (b_0)=b_0$.
We can check $(\tau B_0)^2=1$. See figure 2.

\begin{figure}[htbp]
\centering
\includegraphics[height=3cm]{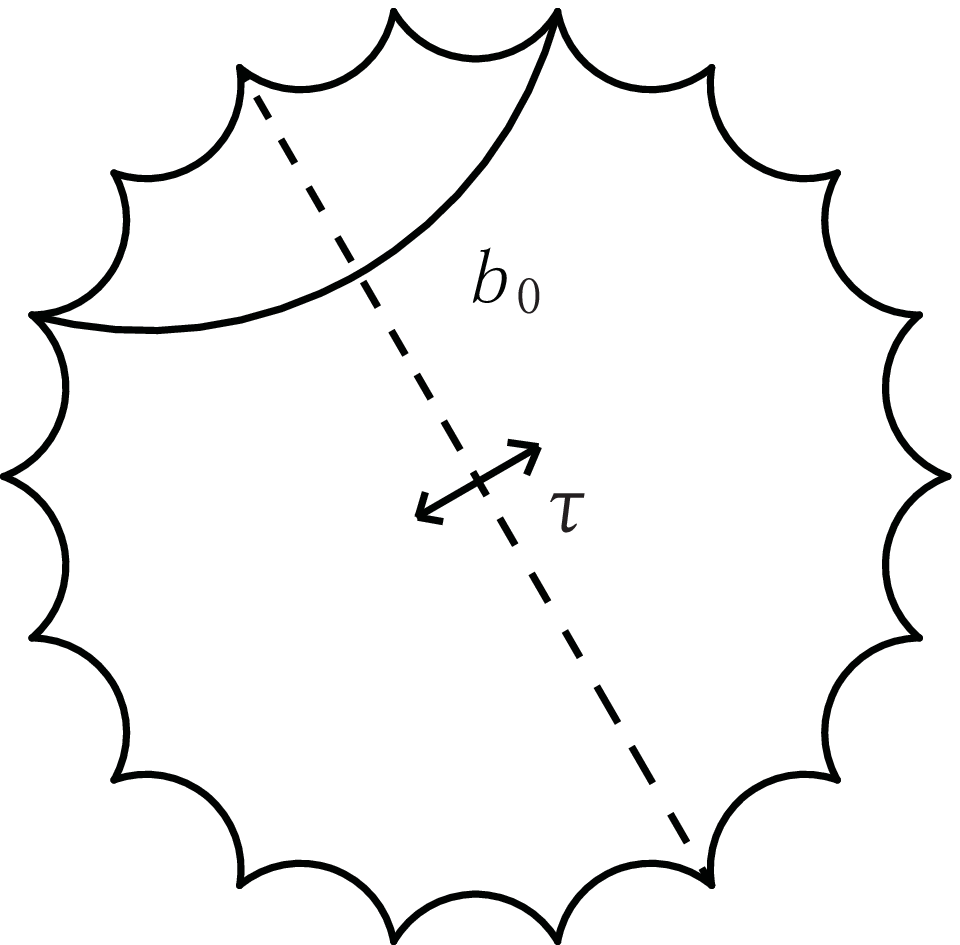} \\
\textbf{Figure 2.}
\end{figure}

In \cite{Du} we know $\text{Mod}^{\pm}(S_g)= \langle \sigma, \tau B_0 \rangle$ for $g \geq 5$.
We will see it is also true for $g=3,4$.

\section{The main result and the proof}

\begin{theorem}
Let $\tau, \sigma, B_0$ as before. For $g=3,4$, $\text{Mod}^{\pm}(S_g) = \langle \sigma, \tau B_0 \rangle$.
\end{theorem}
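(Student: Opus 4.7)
Let $H = \langle \sigma, \tau B_0 \rangle$. Since $\text{Mod}(S_g)$ has index two in $\text{Mod}^{\pm}(S_g)$, it is enough to show that $H$ contains every Humphries generator $A_1, \dots, A_{2g}, B_0$ together with one orientation-reversing element; the latter comes for free once $B_0 \in H$, because then $\tau = (\tau B_0) B_0^{-1} \in H$.

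The first stage is elementary bookkeeping inside $H$. Using $\tau \sigma \tau^{-1} = \sigma^{-1}$, $\sigma^k B_0 \sigma^{-k} = B_k$, and $\tau B_0 \tau = B_0^{-1}$, I rewrite the conjugate involutions $\sigma^k (\tau B_0) \sigma^{-k}$ in the form $\tau \sigma^{-2k} B_k$, multiply two such involutions to cancel the $\tau$'s, and compensate by a power of $\sigma$. This produces, for every $k$, a word of the form $B_0^{-1} B_k \in H$; conjugation by $\sigma$ then supplies every $B_i^{-1} B_{i+k}$. These twist-differences all die under the abelianization, so a further geometric input is needed to isolate a single Dehn twist.

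That extraction is the heart of the proof and the point at which $g = 3, 4$ must diverge from $g \geq 5$. In the $14$- or $18$-gon there is no curve simultaneously lying in a $\sigma$-orbit and disjoint from the configuration used in \cite{Du}, so the periodic-orbit trick there is unavailable. I would instead exploit the auxiliary curves $c_k$ introduced in the preliminaries: their pairwise intersection pattern $i(c_j, c_k) = 1$ for $j \neq k$, together with the explicit intersection numbers between $c_k$ and the $a_j, b_j$, provides enough control to design a short word $W$ in $\sigma$ and $\tau B_0$ whose effect on every named curve can be checked by hand. Ideally $W$ — or a small combination of $W$ with the twist-differences already known to be in $H$ — can be identified either with a single Dehn twist $C_k$, or with a product of two twists supported on a small subsurface from which one factor can be split off using a braid or chain relation against an adjacent $A_j$ or $B_j$.

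Once any single Humphries generator lies in $H$, conjugation by $\sigma$ permutes each orbit cyclically; a parallel extraction (or a suitable conjugation of the first one by an element of $H$ that moves a $b$-curve to an $a$-curve) then yields a generator of the remaining type, and Humphries' theorem forces $\text{Mod}(S_g) \subseteq H$. Combined with $\tau \in H$ this upgrades to $H = \text{Mod}^{\pm}(S_g)$. The main obstacle is the curve-by-curve verification that the chosen $W$ acts as claimed on every $a_j, b_k, c_l$: the low genus leaves essentially no slack in the $(4g+2)$-gon, so $W$ must be designed tightly and the full intersection data of the preliminaries invoked — which is presumably why the paper ultimately treats $g = 3$ and $g = 4$ as separate cases.
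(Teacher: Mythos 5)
Your opening and closing moves match the paper: the identity trick with the conjugates $\sigma^k(\tau B_0)\sigma^k(\tau B_0)$ giving all twist-differences $B_iB_k^{-1}$ is exactly the paper's Step 1, and the endgame (Humphries generators plus the fact that $\tau B_0$ is orientation-reversing and $\text{Mod}(S_g)$ has index two) is the paper's Step 4. But the heart of the proof is missing. You correctly observe that the twist-differences alone do not obviously yield a single Dehn twist, yet your plan for the extraction is only a hope: ``design a short word $W$ \dots ideally $W$ can be identified with a single Dehn twist $C_k$, or one factor can be split off using a braid or chain relation.'' No such word is exhibited, and neither the braid nor the chain relation gives a mechanism for expressing a single twist in terms of elements already known to lie in $H$; this is precisely the step that cannot be waved at.

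The paper's mechanism is the lantern relation: $B_0B_2E=A_1A_3A_5F$ is rewritten as $A_1=(B_0A_3^{-1})(B_2A_5^{-1})(EF^{-1})$, so a single twist becomes a product of twist-differences, and $EF^{-1}$ is further split as $(EB_i^{-1})(B_iB_j^{-1})(B_jF^{-1})$. The real work (Steps 2 and 3) is then to show that the mixed differences $B_iA_j^{-1}$, $EB_i^{-1}$, $B_jF^{-1}$ lie in $H$, which is done by transporting pairs of disjoint curves with elements already in $H$ and verifying identities such as $B_1B_5^{-1}B_6^{-1}(b_0)=a_5$ ($g=4$), $c_0=B_4^{-1}(b_0)$ and $B_5^{-1}B_6^{-1}B_2(b_1)=a_2$ ($g=3$), $f=B_3^{-1}A_6A_5A_4(b_0)$ and $e=A_2A_1A_4^{-1}B_1(a_5)$, using the explicit disjointness data in the $(4g+2)$-gon. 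Your instinct to use the $c$-curves is on target — that is exactly why they are introduced, since for $g=3$ no $b$-curve is disjoint from the relevant configurations — but without the lantern relation and without establishing the intermediate family $B_iA_j^{-1}\in H$, your proposal does not yet contain a proof; it is an outline with the decisive step left open.
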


\begin{proof}

Denote the subgroup generated by $\tau B_0$ and $\sigma$ as $G$.
We will prove that $G$ includes all the elements in $\text{Mod}^{\pm}(S_g)$.
Similar to \cite{Du}, The proof of the lemma has 4 steps.

Step 1. For every $i,k$, we prove $B_iB_k^{-1}$ is in $G$.

Step 2. For every $i,k$, we prove $B_iA_k^{-1}$ is in $G$.

Step 3. Using lantern relation, we prove that for every $i$, $A_i$ is in $G$.

Step 4. $G = \text{Mod}^{\pm}(S_g)$.

The motivation of step 2 and step 3 is as follow.
There is a lantern on the surface where the curves in the lantern relation appear as
$a_1$, $a_3$, $a_5$, $b_0$, $b_2$, $e$, $f$ showed on the upper side of figure 3.
The lantern relation $B_0B_2E=A_1A_3A_5F$ can be also written as $A_1=(B_0A_3^{-1})(B_2A_5^{-1})(EF^{-1})$.
So one Dehn twist can be decomposed into the product of pairs of Dehn twists.
Draw the lantern in the $(4g+2)$-gon as on the lower side of figure 3.
We will find some of the pairs of Dehn twists we use can be expressed as the form $B_k A_i^{-1}$.
When the $g \leq 2$, we cannot find a lantern on the surface.

\begin{figure}[htbp]
\centering
\includegraphics[height=7cm]{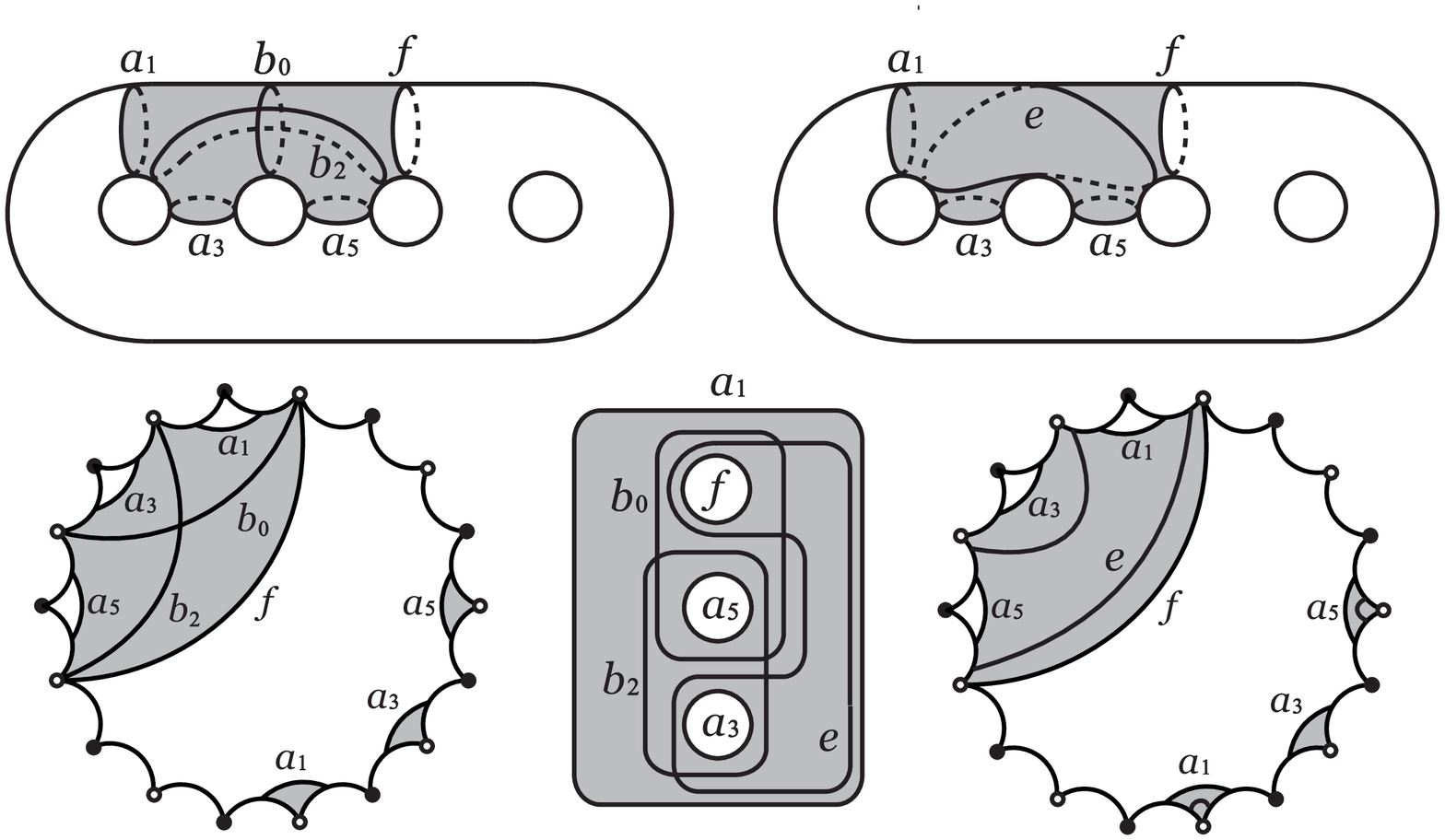} \\
\textbf{Figure 3.}
\end{figure}

\bigskip

The proof of Step 1:

The reason for $B_0B_k^{-1} \in G$ is $\sigma^{k}(\tau B_0)\sigma^{k}(\tau B_0) = B_0B_{k}^{-1}$.
After conjugating with $\sigma^{i}$, we have for every $i,k$, $B_iB_{i+k}^{-1}$ is in $G$.

\bigskip

The proof of step 2:

Suppose the genus $g=4$.

We already know $b_{11}$ does not intersect with $b_0$ or $b_6$.
So $B_{11}B_6^{-1}$ maps the pair of curves $(b_{11},b_0)$ to the pair of curves $(b_{11},B_6^{-1}(b_0))$.
Since $B_{11}B_0^{-1}$ is in $G$, $B_{11}(B_6^{-1}B_0^{-1}B_6)$ is in $G$.
We also have for every $k$,
$B_{k}(B_6^{-1}B_0^{-1}B_6)$ $=(B_{k}B_{11}^{-1})$ $(B_{11}(B_6^{-1}B_0^{-1}B_6))$ is in $G$.
See figure 4.

\begin{figure}[htbp]
\centering
\includegraphics[height=3.5cm]{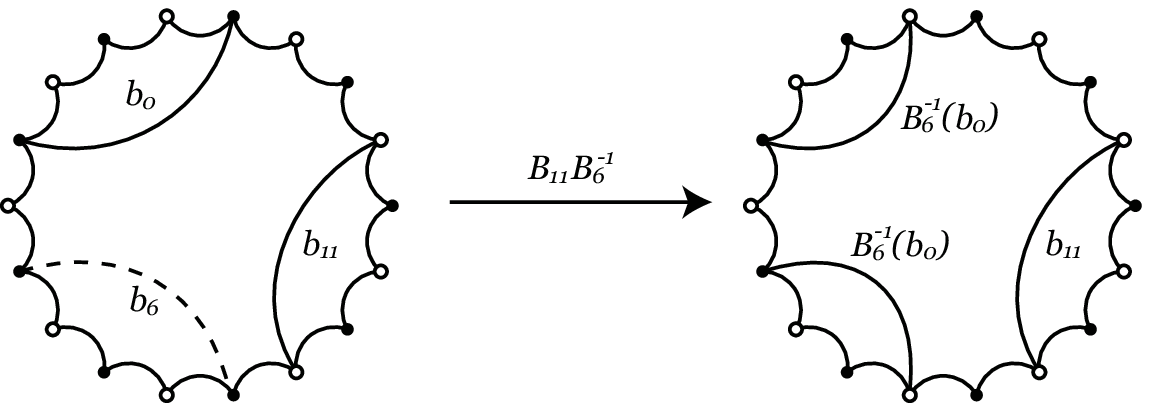} \\
\textbf{Figure 4.}
\end{figure}

We know $b_{1}$ does not intersect with $b_5$. We can check $B_1B_5^{-1}B_6^{-1}(b_0) = a_5$.
So $B_{5}^{-1}$ maps the pair of curves $(b_5,B_6^{-1}(b_0))$ to the pair of curves $(b_5,B_5^{-1}B_6^{-1}(b_0))$,
$B_{1}$ maps the pair of curves $(b_5,B_5^{-1}B_6^{-1}(b_0))$ to the pair of curves $(b_5,a_5)$.
This means $B_1 B_5^{-1}$ maps the pair of curves $(b_5,B_6^{-1}(b_0))$ to the pair of curves $(b_5,a_5)$.
See figure 5.

\begin{figure}[htbp]
\centering
\includegraphics[height=3.5cm]{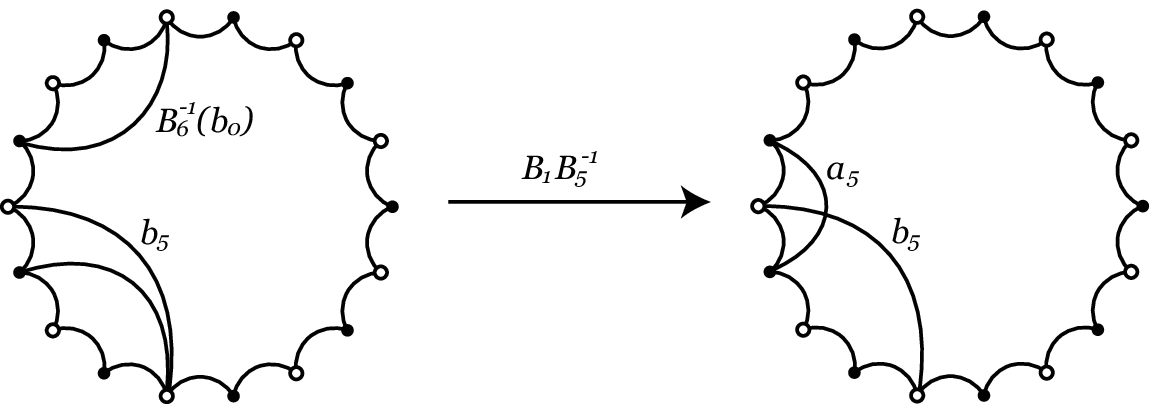} \\
\textbf{Figure 5.}
\end{figure}

Hence $B_5A_5^{-1}$ is in $G$.
After conjugating some power of $\sigma$ and multiplying some $B_iB_j^{-1}$,
we have for every $i,j$, $B_{i}A_j^{-1}$ is in $G$.

\bigskip

Suppose the genus $g=3$.

We know that $b_9$ does not intersect with $b_0$ or $b_4$.
So $B_9 B_4^{-1}$ maps the pair of curves $(b_{9},b_0)$ to the pair of curves $(b_{9},B_4^{-1}(c_0))$.
We can also check when the genus is 3, $c_0 = B_4^{-1}(b_0)$. So $B_9 C_0^{-1}$ is in $G$.
See figure 6.

\begin{figure}[htbp]
\centering
\includegraphics[height=3.5cm]{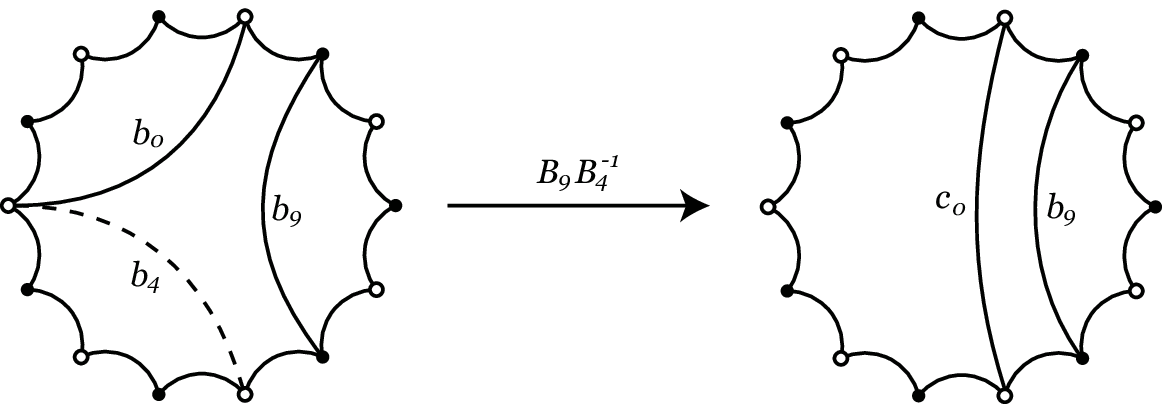} \\
\textbf{Figure 6.}
\end{figure}

After conjugating with some power of $\sigma$ and multiplying some $B_iB_j^{-1}$,
we have for every $i,j$, $B_{i}C_j^{-1}$ and $C_iB_j^{-1}$ are in $G$.
We also have for every $i,j$, $C_{i}C_{j}^{-1}$ is in $G$.

We know $c_0$ does not intersect with $b_1$ or $b_2$.
So $B_{2}C_{0}^{-1}$ maps the pair of curves $(c_{0},b_{1})$ to the pair of curves $(c_{0},B_2(b_1))$.
Then $C_0 (B_2 B_1 B_2^{-1})$ is in $G$. For every $i$, $C_i (B_2 B_1 B_2^{-1})$ is also in $G$.
See figure 7.

\begin{figure}[htbp]
\centering
\includegraphics[height=3.5cm]{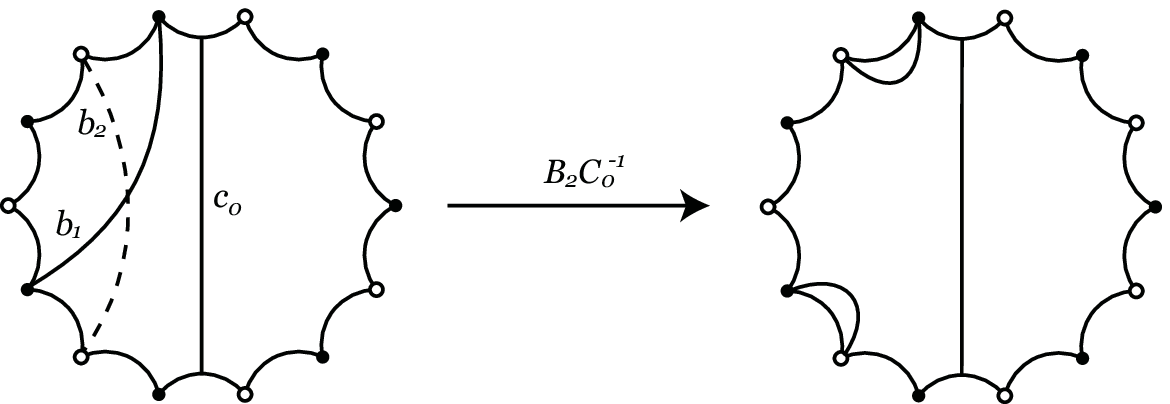} \\
\textbf{Figure 7.}
\end{figure}

We know $c_4$ does not intersect with $b_6$ or $B_2(b_{1})$.
So $C_{4}B_{6}^{-1}$ maps the pair of curves $(c_{4},B_2(b_{1}))$ to the pair of curves $(c_{4},B_6^{-1}B_2(b_1))$.
Then $C_0 (B_6^{-1} B_2 B_1$ $B_2^{-1} B_6)$ is in $G$.
See figure 8.

\begin{figure}[htbp]
\centering
\includegraphics[height=3.5cm]{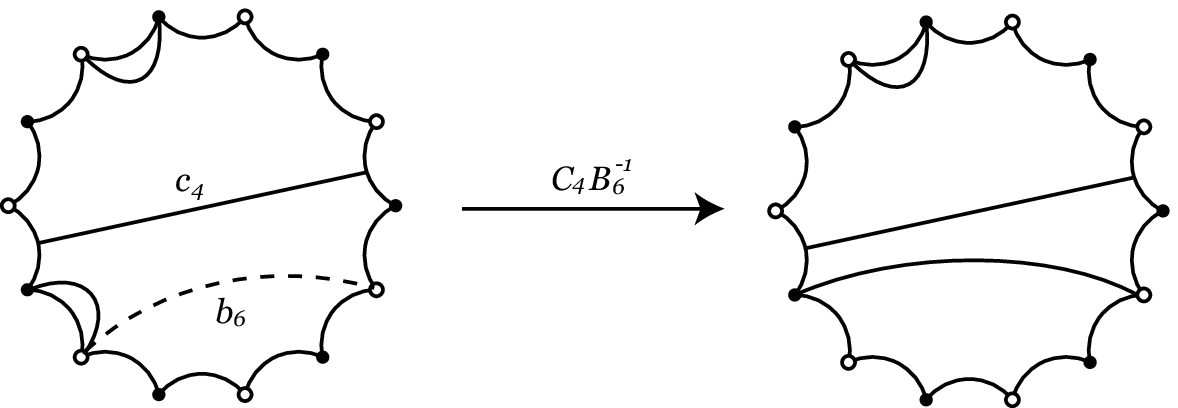} \\
\textbf{Figure 8.}
\end{figure}

We know $c_4$ does not intersect with $b_5$ or $B_6^{-1}B_2(b_1)$.
So $C_{4}B_{5}^{-1}$ maps the pair of curves $(c_{4},B_6^{-1}B_2(b_{1}))$ to the pair of curves $(c_{4},$ $B_5^{-1}$$B_6^{-1}$$B_2(b_1))$.
Then $C_4$ $(B_5^{-1}B_6^{-1}B_2B_1B_2^{-1}B_6B_5)$ is in $G$.
See figure 9.

\begin{figure}[htbp]
\centering
\includegraphics[height=3.5cm]{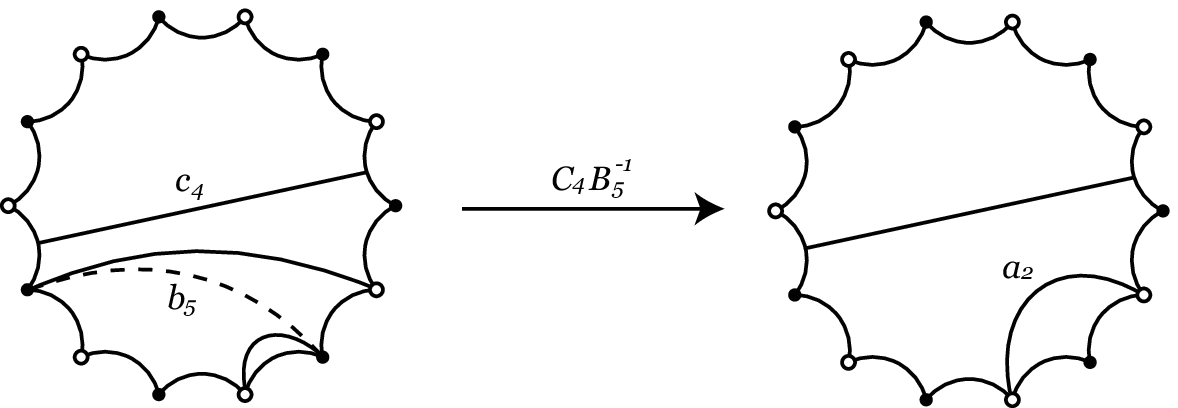} \\
\textbf{Figure 9.}
\end{figure}

We can check that $B_5^{-1}B_6^{-1}B_2(b_1)=a_2$.
So $C_4 A_2^{-1} = C_4$ $(B_5^{-1} B_6^{-1} B_2$ $B_1 B_2^{-1} B_6 B_5)$ is in $G$.
Conjugating with some power of $\sigma$ and multiplying $C_jC_k^{-1}$, we have for every $j,k$, $C_jA_k^{-1}$ is in $G$.
Multiplying it by $B_i C_j^{-1}$, we have for every $i,k$, $B_iA_k^{-1}$ is in $G$.

\bigskip

The proof of step 3:

We want to show for every $i$, $A_i$ is in $G$.

Recall lantern relation, we have $B_0B_2E = A_1A_3A_5F$, or
$A_1=$ $(B_0A_3^{-1})$ $(B_2A_5^{-1})$ $(EF^{-1})$,
where $e$ and $f$ are the curves showed in figure 3.
By the result of step 2, $B_0A_3^{-1}$ and $B_2A_5^{-1}$
are in $G$. What we need is to prove $EF^{-1}$ is also in $G$.
Notice $EF^{-1}=(EB_{i}^{-1})(B_iB_j^{-1})(B_{j}F^{-1})$.
We only need to show there exist some $i,j$ such that
$EB_i^{-1}$ and $B_jF^{-1}$ are in $G$.

Suppose $g=4$.

We can check that $f=B_3^{-1}A_6A_5A_4(b_0)$.
We also know $b_7$ does not intersect with $a_4, a_5, a_6, b_3$.
So $(B_7B_3^{-1})(A_6B_7^{-1})(A_5B_7^{-1})(A_4B_7^{-1})$ maps $(b_7, b_0)$ to $(b_7,f)$.
Hence $B_7F^{-1}$ is in $G$.
See figure 10.

\begin{figure}[htbp]
\centering
\includegraphics[height=3.5cm]{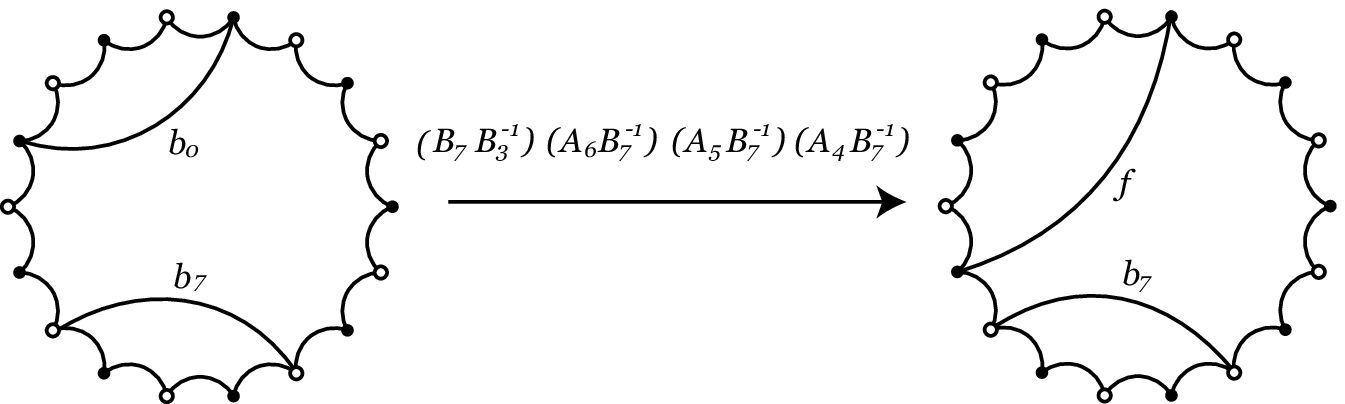} \\
\textbf{Figure 10.}
\end{figure}

We can check $e=A_2A_1A_4^{-1}B_1(a_5)$.
Since $b_{12}$ does not intersect with $a_1, a_2, a_4, a_5, b_1$,
$(A_2B_{12}^{-1})$ $(A_1B_{12}^{-1})$ $(B_{12}A_4^{-1})$ $(B_1B_{12}^{-1})$ maps $(a_5,b_{12})$ to $(e,b_{12})$.
Hence $EB_{12}^{-1}$ is in $G$.
See figure 11.

\begin{figure}[htbp]
\centering
\includegraphics[height=3.5cm]{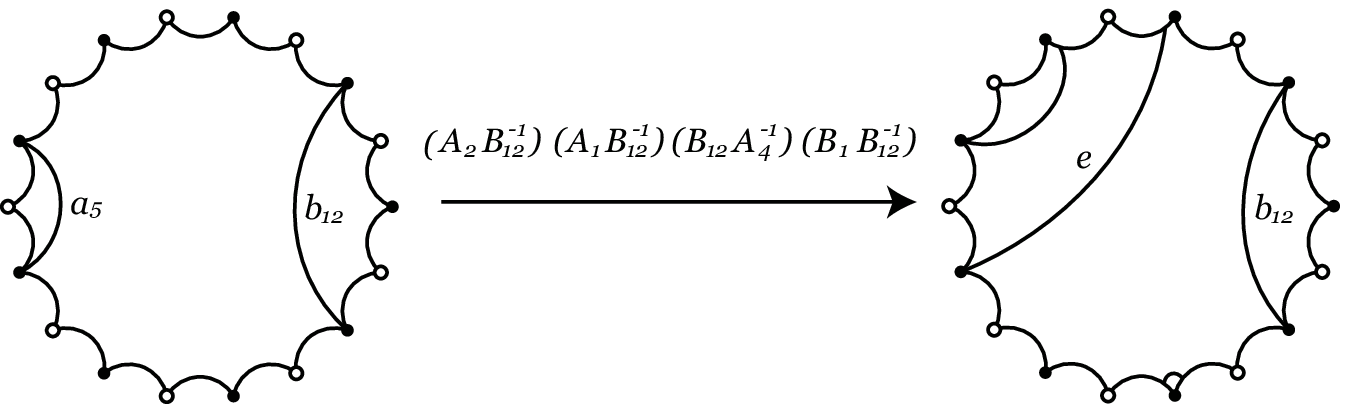} \\
\textbf{Figure 11.}
\end{figure}

\bigskip

Suppose $g=3$.

The fact $f=B_3^{-1}A_6A_5A_4(b_0)$ still holds.
When $g=3$ we cannot find some $b_i$ that does not intersect with $a_4, a_5, a_6, b_3$ simultaneously.
We use some curves $c_i$ instead.

At first we find $c_6$ does not intersect with $a_4, a_5, b_0$.
So $(A_5 C_6^{-1})$ $(A_4 C_6^{-1})$ maps $(c_6,b_0)$ to $(c_6, A_5A_4(b_0))$,
$C_6 (A_5 A_4 B_0 A_4^{-1} A_5^{-1})^{-1}$ is in $G$.
See figure 12.

\begin{figure}[htbp]
\centering
\includegraphics[height=3.5cm]{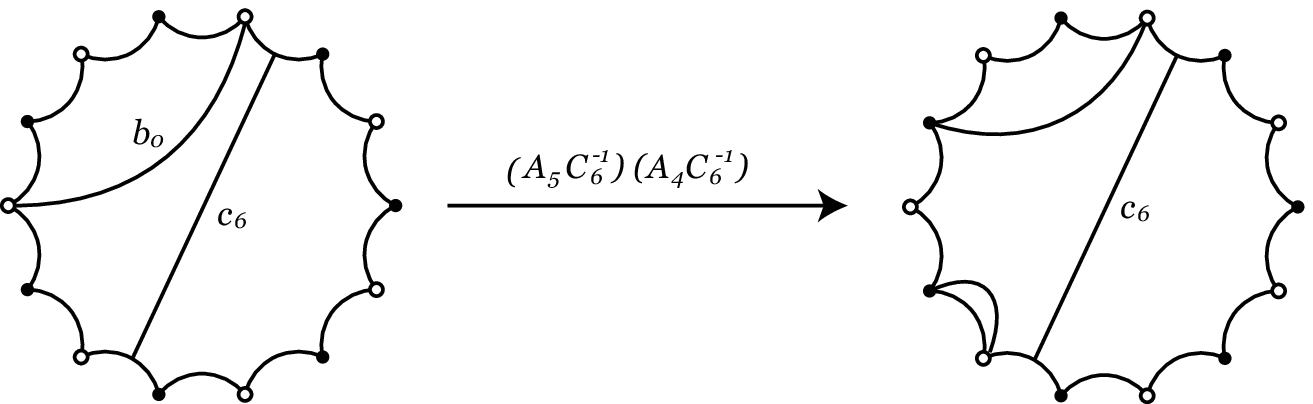} \\
\textbf{Figure 12.}
\end{figure}

$B_8 (A_5 A_4 B_0 A_4^{-1} A_5^{-1})^{-1} = (B_8 C_6^{-1})(C_6 (A_5 A_4 B_0 A_4^{-1} A_5^{-1})^{-1})$ is also in $G$.
Then we find $b_8$ does not intersect with $a_6, b_3$ or $A_5A_4(b_0)$.
So $(B_8 B_3^{-1})(B_8^{-1} A_6)$ maps $(b_8, A_5A_4(b_0))$ to $(b_8, B_3^{-1}A_6A_5A_4(b_0))=(b_8,f)$.
Hence $B_8 F^{-1}$ is in $G$.
See figure 13.

\begin{figure}[htbp]
\centering
\includegraphics[height=3.5cm]{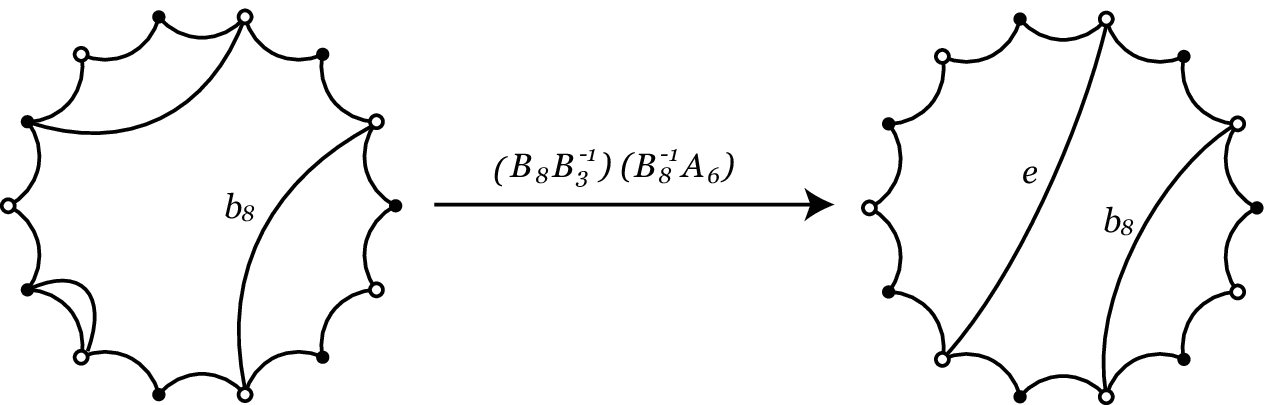} \\
\textbf{Figure 13.}
\end{figure}

Similarly,
The fact $e=A_2A_1A_4^{-1}B_1(a_5)$ still holds.
When $g=3$, we can find $c_i$ does not intersect with $a_1, a_2, a_4, a_5, b_1$.
So $(A_2 C_{6}^{-1})$ $(A_1 C_{6}^{-1})$ $(C_{6} A_4^{-1})$ $(B_1 C_{6}^{-1})$ maps $(a_5,c_{6})$ to $(e,c_{6})$.
Hence $E C_{6}^{-1}$ is in $G$. And then multiply $C_6 B_i^{-1}$, we have $E B_i^{-1}$ in $G$.
See figure 14.

\begin{figure}[htbp]
\centering
\includegraphics[height=3.5cm]{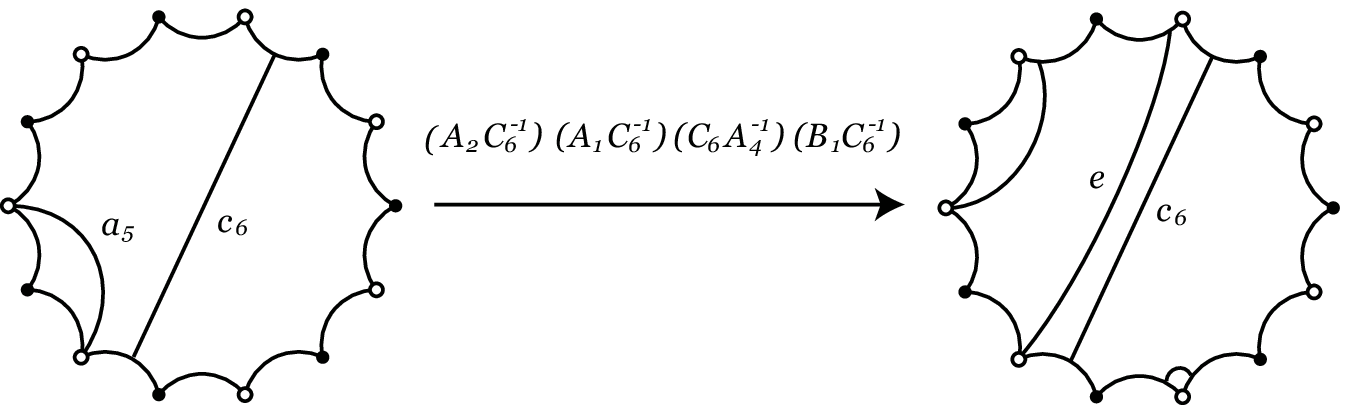} \\
\textbf{Figure 14.}
\end{figure}

\bigskip

The proof of step 4:

Since both $B_iA_j^{-1}$ and $A_j$ are in $G$, by Humphries's result, $G$ contains the mapping class group $\text{Mod}(S_g)$.
Now $\tau B_0 \in G$ is an orientation reversing element. $\text{Mod}(S_g)$ is an index 2 subgroup of $\text{Mod}^{\pm}(S_g)$.
So $G = \text{Mod}^{\pm}(S_g)$ for $g=3,4$.
\end{proof}

\begin{theorem}
For $g=1$, $\text{Mod}^{\pm}(S_1)$ is $GL(2,\mathbb{Z})$. It cannot be generated by two elements of finite orders.
\end{theorem}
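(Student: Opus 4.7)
The plan is to suppose for contradiction that $GL(2,\mathbb{Z})=\langle x,y\rangle$ with $x,y$ of finite order, and derive a contradiction by combining the determinant homomorphism with the abelianization of $SL(2,\mathbb{Z})$.

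First I would observe that any finite-order $M\in GL(2,\mathbb{Z})$ with $\det M=-1$ must have order $2$, since such an $M$ is conjugate over $\mathbb{R}$ to the reflection $\mathrm{diag}(1,-1)$. Because $\det\colon GL(2,\mathbb{Z})\twoheadrightarrow\{\pm 1\}$ is surjective, at least one of the two generators has determinant $-1$; after relabelling assume $x$ does, so that $x^{2}=1$. The argument then splits on the value of $\det(y)$.

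Case $\det(y)=1$: here $y\in SL(2,\mathbb{Z})$ has order $n\in\{1,2,3,4,6\}$, these being the only torsion orders in $SL(2,\mathbb{Z})$. A Reidemeister--Schreier calculation with coset representatives $\{1,x\}$ shows that the index-two subgroup $SL(2,\mathbb{Z})=\ker(\det)$ is generated by $y$ and $xyx$. Using the classical presentation $SL(2,\mathbb{Z})=\langle S,R\mid S^{4}=R^{6}=1,\ S^{2}=R^{3}\rangle$ one computes $SL(2,\mathbb{Z})^{\mathrm{ab}}\cong\mathbb{Z}/12$. Conjugation by $x$ induces an automorphism of this cyclic group, so it is multiplication by some unit $u\in(\mathbb{Z}/12)^{\times}$; hence the class $[xyx]=u\cdot[y]$ lies in the cyclic subgroup $\langle[y]\rangle$. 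That subgroup has order at most $n\leq 6$, so $\{[y],[xyx]\}$ cannot generate $\mathbb{Z}/12$, contradicting $SL(2,\mathbb{Z})=\langle y,xyx\rangle$.

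Case $\det(y)=-1$: the same observation forces $y^{2}=1$, so $\langle x,y\rangle$ is a quotient of $\mathbb{Z}/2\ast\mathbb{Z}/2$, the infinite dihedral group. Every such quotient is virtually cyclic, whereas $GL(2,\mathbb{Z})$ contains a nonabelian free subgroup (for instance in the level-$2$ principal congruence subgroup of $SL(2,\mathbb{Z})$) and is not virtually cyclic. Contradiction. The main thing to nail down is the abelianization computation together with the identification of $y$ and $xyx$ as Schreier generators of the index-two subgroup; once these are in hand, each case collapses to a one-line contradiction.
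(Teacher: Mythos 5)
Your proposal is correct, but it follows a genuinely different route from the paper. The paper passes to $PGL(2,\mathbb{Z})\cong D_6 *_{\mathbb{Z}_2} D_4$, uses the amalgam presentation to classify conjugacy classes of torsion elements (showing each is conjugate to one of $1,a,a^2,t,at,a^2t,b,bt$), and then maps onto the finite quotient $D_6\times\mathbb{Z}_2$ to check that the only pairs of torsion classes whose images could generate consist of two involutions, finishing with the remark that two involutions generate only a dihedral group. You instead work directly in $GL(2,\mathbb{Z})$: the linear-algebra observation that a torsion matrix of determinant $-1$ has eigenvalues $1,-1$ and hence is an involution replaces the paper's normal-form analysis; the case $\det(y)=1$ is killed by the Reidemeister--Schreier generators $y,\,xyx$ of $SL(2,\mathbb{Z})$ together with $SL(2,\mathbb{Z})^{\mathrm{ab}}\cong\mathbb{Z}/12$ and the bound $6$ on torsion orders in $SL(2,\mathbb{Z})$ (the unit-multiplication argument for $[xyx]\in\langle[y]\rangle$ is exactly right), and the case $\det(y)=-1$ by the fact that a quotient of $\mathbb{Z}/2*\mathbb{Z}/2$ is virtually cyclic while $GL(2,\mathbb{Z})$ contains a nonabelian free subgroup. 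Each step checks out. What your argument buys is brevity and standard inputs (torsion orders, the abelianization $\mathbb{Z}/12$, Sanov's free subgroup) in place of the paper's explicit word/conjugacy computation and finite-quotient check; it also makes explicit the final point the paper leaves as a bare assertion, namely why a group generated by two involutions cannot be all of $(P)GL(2,\mathbb{Z})$. What the paper's computation buys in exchange is an explicit list of the torsion conjugacy classes of $PGL(2,\mathbb{Z})$, which your argument does not need and does not produce. Like the paper, you treat the identification $\mathrm{Mod}^{\pm}(S_1)\cong GL(2,\mathbb{Z})$ as known, which is standard.
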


\begin{proof}
We only need to prove that $PGL(2,\mathbb{Z})$ cannot be generated by two elements of finite orders.

It is well known that $PGL(2,\mathbb{Z}) \cong D_{6} *_{\mathbb{Z}_2} D_4$,
where $D_6$ and $D_4$ are the dihedral group of order 6 and order 4 respectively
(see, for example, \cite{Koru} or \cite{SIK}). It has a presentation as
$$PGL(2,\mathbb{Z})=\langle a,b,t \mid a^3=t^2=b^2=1, at=ta^2, bt=tb \rangle.$$
Every element $\alpha$ in $PGL(2,\mathbb{Z})$ can be written as a reduced form in one of the following 3 types:
\begin{enumerate}
  \item $a^{i_1}b^{j_1} \dots a^{i_k}b^{j_k}t$,
  \item $b^{j_0}a^{i_1}b^{j_1} \dots a^{i_k}b^{j_k}t$, or
  \item $a^{i_1}b^{j_1} \dots a^{i_k}b^{j_k}a^{j_{k+1}}t$.
\end{enumerate}
Here each $i_n \in \{1,2\}$ and each $j_n = 1$.

For an element in type (2), $b^{j_0}a^{i_1}b^{j_1} \dots a^{i_k}b^{j_k}t$ can be conjugated to\\
$a^{i_1}b^{j_1} \dots a^{i_k}b^{j_k+j_0}t$ $= a^{i_1}b^{j_1} \dots a^{i_k}t$.
So its conjugacy class is the same as an element in type (3) with a shorter word length.

For an element in type (3), $a^{i_1}b^{j_1} \dots a^{i_k}b^{j_k}a^{j_{k+1}}t$ $= a^{i_1}b^{j_1} \dots a^{i_k}b^{j_k}ta^{2j_{k+1}}$
can be conjugated to $a^{i_1+2j_{k+1}}b^{j_1} \dots a^{i_k}b^{j_k}t$.
So its conjugacy class is the same as an element in type (1) or (2) with a shorter word length.

For an element in type (1), since the word length its power will be larger, it must not be of finite order.
So an element of finite order in $PGL(2,\mathbb{Z})$ must be conjugated to one of the following 8 elements:
$1, a, a^2, t, at, a^2t, b, bt$.

By adding in a new relation $ab=ba$, we get a quotient group homomorphic to the
Cartesian product $D_6 \times \mathbb{Z}_2$, which is a finite group with the presentation
$$\langle a_1, t_1, b_1 \mid a_1^3 = t_1^2 = b_1^2 = 1, a_1t_1 = t_1a_1^2, b_1t_1 = t_1b_1, a_1b_1 = b_1a_1 \rangle.$$

For the convenience of calculation, we can think of $D_6 \times \mathbb{Z}_2$ as a permutation subgroup of the symmetric group $S_5$:
\begin{table}[h]
\begin{center}
\begin{tabular}{|c|c|c|c|}
  \hline
  element in $D_6 \times \mathbb{Z}_2$ & permutation & element in $D_6 \times \mathbb{Z}_2$ & permutation\\
  \hline
  1 & () & $b_1$ & (45) \\
  \hline
  $a_1$ & (123) & $a_1b_1$ & (123)(45) \\
  \hline
  $a_1^2$ & (132) & $a_1^2b_1$ & (132)(45) \\
  \hline
  $t_1$ & (12) & $b_1t_1$ & (12)(45) \\
  \hline
  $a_1t_1$ & (13) & $a_1b_1t_1$ & (13)(45) \\
  \hline
  $a_1^2t_1$ & (23) & $a_1^2b_1t_1$ & (23)(45) \\
  \hline
\end{tabular}
\end{center}
\end{table}

We can check all the possible images in $D_6 \times \mathbb{Z}_2$ of the conjugacy classes in $PGL(2,\mathbb{Z})$ as follow:
\begin{table}[h]
\begin{center}
\begin{tabular}{|c|c|}
  \hline
  Conjugacy classes in $PGL(2,\mathbb{Z})$ & elements in $D_6 \times \mathbb{Z}_2$\\
  \hline
  $a$, $a^2$ & $a_1$, $a_1^2$ \\
  \hline
  $t$, $at$, $a^2t$ & $t_1$, $a_1t_1$, $a_1^2t_1$\\
  \hline
  $b$, $bt$ & $b_1$, $b_1t_1$\\
  \hline
\end{tabular}
\end{center}
\end{table}

We have the following fact:
if two elements in $\{a_1, a_1^2, t_1, a_1t_1, a_1^2t_1, b_1, b_1t_1\}$
can generate $D_6 \times \mathbb{Z}_2$, the only possible cases are
$\langle a_1t_1, b_1t_1\rangle$ and $\langle a_1^2t_1, b_1t_1\rangle$.

Each one of $a_1t_1, a_1^2t_1, b_1t_1$ has order 2.
If an element in $PGL(2,\mathbb{Z})$ is mapped to $a_1t_1$, $a_1^2t_1$, or $b_1t_1$,
it must also have order 2. But two elements of order 2 can only generate a dihedral group,
not $PGL(2,\mathbb{Z})$.
\end{proof}

\begin{remark}
Though $\text{GL}(2,\mathbb{Z})$ cannot be generated by two torsion elements, it can be generated by two elements.
In fact, since
$$
\left(
\begin{array}{cc}
0 & 1 \\
1 & 0
\end{array}
\right)
\left(
\begin{array}{cc}
1 & 1 \\
0 & 1
\end{array}
\right)
\left(
\begin{array}{cc}
0 & 1 \\
1 & 0
\end{array}
\right)
=
\left(
\begin{array}{cc}
1 & 0 \\
1 & 1
\end{array}
\right),
$$
we have
$$\text{GL}(2,\mathbb{Z})=\langle
\left(
\begin{array}{cc}
1 & 1 \\
0 & 1
\end{array}
\right),
\left(
\begin{array}{cc}
0 & 1 \\
1 & 0
\end{array}
\right)
\rangle.$$
So the extended mapping class group $GL(2,\mathbb{Z})$ for $g=1$ case:
(1) can be generated by two elements; (2) cannot be generated by two elements of finite orders.
This is different from the case $g \geq 3$. 
\end{remark}

\end{document}